\newcommand{\Z}{\mathbb{Z}}                    
\newcommand{\Sp}{\operatorname{Sp}}
\newcommand{\GL}{\operatorname{GL}}
\newcommand{\SO}{\operatorname{SO}}
\theoremstyle{plain}
\newtheorem{lause}{Theorem}[section]
\newtheorem{lemma}[lause]{Lemma}
\newtheorem{prob}[lause]{Problem}
\theoremstyle{definition}
\newtheorem{remark}[lause]{Remark}
\newtheorem{conjecture}[lause]{Conjecture}
\begin{document}

\title[A counterexample to a conjugacy conjecture of Steinberg]{A counterexample to a conjugacy conjecture of Steinberg}

\author{Mikko Korhonen}
\address{School of Mathematics, The University of Manchester, Manchester M13 9PL, United Kingdom}
\email{korhonen\_mikko@hotmail.com}

\begin{abstract}
Let $G$ be a semisimple algebraic group over an algebraically closed field of characteristic $p \geq 0$. At the 1966 International Congress of Mathematicians in Moscow, Robert Steinberg conjectured that two elements $a, a' \in G$ are conjugate in $G$ if and only if $f(a)$ and $f(a')$ are conjugate in $\GL(V)$ for every rational irreducible representation $f: G \rightarrow \GL(V)$. Steinberg showed that the conjecture holds if $a$ and $a'$ are semisimple, and also proved the conjecture when $p = 0$. In this paper, we give a counterexample to Steinberg's conjecture. Specifically, we show that when $p = 2$ and $G$ is simple of type $C_5$, there exist two non-conjugate unipotent elements $u, u' \in G$ such that $f(u)$ and $f(u')$ are conjugate in $\GL(V)$ for every rational irreducible representation $f: G \rightarrow \GL(V)$.
\end{abstract}

\maketitle

\section{Introduction}

Let $G$ be a semisimple algebraic group over an algebraically closed field $K$ of characteristic $p \geq 0$. At the 1966 International Congress of Mathematicians in Moscow, Steinberg proposed the following conjecture \cite[Problem (4)]{SteinbergICM}.

\begin{conjecture}[Steinberg]\label{conjecture:steinberg}
Two elements $a$ and $a'$ of $G$ are conjugate in $G$ if and only if $f(a)$ and $f(a')$ are conjugate in $\GL(V)$ for every rational irreducible representation $f: G \rightarrow \GL(V)$.
\end{conjecture}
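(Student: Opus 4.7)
The plan is to exhibit two non-conjugate unipotent elements $u, u' \in G = \Sp_{10}(K)$ (with $K$ algebraically closed of characteristic $2$) that act with the same Jordan type on every rational irreducible representation. The key reduction is Steinberg's tensor product theorem: every rational irreducible $G$-module factors as $L(\lambda_0) \otimes L(\lambda_1)^{(2)} \otimes L(\lambda_2)^{(4)} \otimes \cdots$ with each $\lambda_i$ a $2$-restricted dominant weight. Because the standard Frobenius endomorphism of $G$ fixes each unipotent conjugacy class as a set, the Jordan type of $u$ on a twist $L(\lambda)^{(2^i)}$ equals its Jordan type on $L(\lambda)$; moreover, the Jordan type of a unipotent element acting diagonally on a tensor product $V \otimes W$ is determined by its Jordan types on $V$ and on $W$ separately, via the characteristic-$2$ decomposition of $J_m \otimes J_n$ into Jordan blocks. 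Consequently, it suffices to verify that $u$ and $u'$ have the same Jordan type on each of the $2^5 = 32$ restricted irreducibles $L(\lambda)$ for $\lambda = \sum a_i \omega_i$, $a_i \in \{0,1\}$.

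Next I would select candidates for the pair $(u, u')$. In $\Sp_{2n}$ in characteristic $2$, unipotent classes are classified (after Hesselink and Spaltenstein) by partitions of $2n$ enhanced with additional invariants describing how the alternating form pairs with indecomposable $u$-invariant components, so several classes can share a common underlying partition. I would look inside $\Sp_{10}$ for a pair $u, u'$ whose Hesselink--Spaltenstein invariants differ but whose common partition already forces the action on the natural module $L(\omega_1)$ to agree; among such pairs I would search for one for which the extra invariant fails to propagate to a visible difference on any other restricted irreducible.

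The verification step is the main obstacle. One needs to determine, for each of the $32$ restricted highest weights $\lambda$, the Jordan type of $u$ and $u'$ on the irreducible $L(\lambda)$ and confirm that they agree. In characteristic $2$ the structure of $L(\lambda)$ inside the Weyl module $V(\lambda)$ is subtle, and the Jordan type of a unipotent element on $L(\lambda)$ is generally not recoverable from characteristic-independent data such as weight multiplicities; one must identify $L(\lambda)$ explicitly (for instance via its composition factors in $V(\lambda)$, obtained from the linkage principle and known decomposition data for $C_5$) and compute the action of concrete matrix representatives for $u$ and $u'$, almost certainly with the aid of computer algebra. The proof concludes once all $32$ Jordan types coincide while the Hesselink--Spaltenstein invariant witnesses that $u$ and $u'$ are not $G$-conjugate.
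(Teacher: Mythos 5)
Your proposal is correct in substance and follows essentially the same route as the paper: refute the conjecture by exhibiting two non-conjugate unipotent elements of $\Sp_{10}(K)$, $p=2$, sharing the Jordan partition $(2^2,6)$ on the natural module but differing in the Hesselink invariant (the paper's pair is $(2_1^2,6_3)$ and $(2_0^2,6_3)$), reducing to $2$-restricted highest weights via Steinberg's tensor product theorem, and verifying equality of Jordan types on the restricted irreducibles by explicit machine computation. The only differences are refinements: the paper further uses the $p=2$, type $C_l$ factorization $L_G(a_1\varpi_1+\cdots+a_l\varpi_l)\cong L_G(a_1\varpi_1+\cdots+a_{l-1}\varpi_{l-1})\otimes L_G(a_l\varpi_l)$ to cut your $32$ checks down to $17$, and it treats the largest module $L_G(\varpi_1+\varpi_2+\varpi_3+\varpi_4)$ theoretically (via restriction to a $D_5$ subgroup and projectivity of the Steinberg module) instead of by direct computation.
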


One motivation for the conjecture, observed by Steinberg in \cite{SteinbergICM}, is that a positive answer to the conjecture would imply that $G$ has only a finite number of unipotent conjugacy classes. When Steinberg posed Conjecture \ref{conjecture:steinberg}, the finiteness of the number of unipotent conjugacy classes was known in good characteristic \cite[Proposition 5.2]{RichardsonFinite67}, and it was eventually shown to be true in general by Lusztig \cite{LusztigFinite}. Lusztig's proof, based on the theory of Deligne-Lusztig characters of finite groups of Lie type, still remains the only known uniform proof of the result in characteristic $p > 0$.

The main purpose of this paper is to provide a counterexample to Conjecture \ref{conjecture:steinberg}. Specifically, our main result is the following, which is given by Theorem \ref{thm:mainresult}:

\begin{lause}
Let $p = 2$ and let $G$ be simple of type $C_5$. Then there exist two non-conjugate unipotent elements $u, u' \in G$ such that $f(u)$ and $f(u')$ are conjugate in $\GL(V)$ for every rational irreducible representation $f: G \rightarrow \GL(V)$.
\end{lause}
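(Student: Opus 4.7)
The plan is to exhibit a specific pair of non-conjugate unipotent elements $u, u' \in G = \Sp_{10}(K)$ and then verify that their images under every rational irreducible representation have the same Jordan canonical form. Because two unipotent matrices are $\GL(V)$-conjugate if and only if they have identical Jordan block sizes, the entire question reduces to a comparison of Jordan types.

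First, I would apply the Hesselink--Spaltenstein classification of unipotent classes in $\Sp_{2n}(K)$ in characteristic $2$ to select a pair of classes $[u]$, $[u']$ that share the same underlying partition of $10$ but differ only in the extra decoration on the even parts. Any such pair automatically agrees on the $10$-dimensional natural module; indeed, since the natural module is irreducible, the desired counterexample \emph{must} agree there, else it would already be separated.

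Next, I would invoke Steinberg's tensor product theorem to cut the problem down to finitely many cases. Every rational irreducible $G$-module has the form $V \cong L(\mu_0) \otimes L(\mu_1)^{[1]} \otimes \cdots \otimes L(\mu_r)^{[r]}$ with restricted $\mu_i$, and in type $C_5$ at $p = 2$ there are precisely $2^5 = 32$ restricted irreducibles. Since $G$ is split over $\mathbb{F}_2$, every unipotent class is stable under the geometric Frobenius $F_G$, so $F_G(u)$ is conjugate to $u$ in $G$; hence $u$ acts on each twisted factor $L(\mu_i)^{[i]}$ with the Jordan type of $u$ on $L(\mu_i)$. Because tensoring preserves coincidence of Jordan types factor by factor, it suffices to check that $u$ and $u'$ have identical Jordan block structure on each of the $32$ restricted irreducibles.

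The principal obstacle is this final verification. Since the restricted irreducibles of $C_5$ at $p = 2$ differ from the corresponding Weyl modules in a nontrivial way, no purely combinatorial formula is readily available for the Jordan block sizes. My approach would therefore be computational: realize each restricted irreducible explicitly (for instance, as the head of a Weyl module computed via MeatAxe-style methods), represent $u$ and $u'$ by explicit matrices in $\Sp_{10}(K)$, and for each $k \geq 1$ compare $\dim \ker (u - 1)^k$ with $\dim \ker (u' - 1)^k$ on every such module. Producing the complete table in which all $32$ entries match then yields the theorem.
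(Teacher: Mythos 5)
Your overall strategy coincides with the paper's: choose a pair of classes from Hesselink's classification that share the partition $(2^2,6)$ but differ in the extra invariant, reduce to $2$-restricted highest weights via Steinberg's tensor product theorem (your Frobenius-twist argument is fine, e.g.\ by taking $u,u'$ inside $\Sp_{10}(2)$), and then verify agreement of Jordan types computationally. Two differences are worth noting. First, you leave the pair unspecified; the selection criterion ``same partition, different decoration'' is necessary but not sufficient, since in fact only the pair $(2_1^2,6_3)$, $(2_0^2,6_3)$ works (the paper's Theorem \ref{thm:rankatmost5} shows every other candidate pair in rank $\leq 5$ is separated by some irreducible), so your plan implicitly includes a search over candidate pairs. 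Second, and more substantively, your proposal to brute-force all $2^5=32$ restricted irreducibles is not practicable at the top end: the restricted Steinberg weight $\varpi_1+\cdots+\varpi_5$ gives a module of dimension $2^{25}$, and $\varpi_1+\varpi_2+\varpi_3+\varpi_4$ gives dimension $2^{20}$, far beyond explicit kernel computations. The paper circumvents this with two theoretical inputs: Steinberg's factorization $L_G(a_1\varpi_1+\cdots+a_4\varpi_4+a_5\varpi_5)\cong L_G(a_1\varpi_1+\cdots+a_4\varpi_4)\otimes L_G(a_5\varpi_5)$ for type $C_l$ in characteristic $2$, which cuts the $32$ cases down to $17$, and Lemmas \ref{lemma:steinbergmodule_jordanblocks} and \ref{lemma:char2typeDsteinberg} (projectivity of the Steinberg module over the finite subgroup, plus restriction to a $D_5$ subsystem subgroup), which show without computation that on the largest modules every unipotent element of order $2^k>2$ acts with all Jordan blocks of size $2^k$. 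So your approach is essentially the paper's, but as written the final verification step would fail on the largest restricted weights unless you add reductions of this kind.
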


We will also show that in the case of unipotent elements, our counterexample is minimal in the sense that up to isogenies, there are no other examples for simple groups of rank at most $5$ (Theorem \ref{thm:rankatmost5}).

It remains an open question to determine when exactly the conclusion of Conjecture \ref{conjecture:steinberg} holds. Steinberg proved Conjecture \ref{conjecture:steinberg} in the case where $a$ and $a'$ are both semisimple \cite[6.6]{SteinbergRegular}, and also in the case where $p = 0$ \cite[Theorem 3]{SteinbergConjecture}. Currently it is not known whether the conjecture holds in the case where $p$ is a \emph{good prime} for $G$, and the following related conjecture also remains open.

\begin{conjecture}\label{conjecture:conjecture_more_general}
Two elements $a$ and $a'$ of $G$ are conjugate in $G$ if and only if $f(a)$ and $f(a')$ are conjugate in $\GL(V)$ for every rational representation $f: G \rightarrow \GL(V)$.
\end{conjecture}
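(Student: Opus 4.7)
The setting is $G = \Sp_{10}(K)$ with $K$ algebraically closed of characteristic $2$. Since $u$ and $u'$ are unipotent, $f(u)$ and $f(u')$ are conjugate in $\GL(V)$ if and only if they share the same Jordan partition, so the task reduces to exhibiting two non-conjugate unipotent elements of $G$ whose Jordan partitions on every rational irreducible $G$-module coincide.

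The first step is a reduction to restricted irreducibles. By Steinberg's tensor product theorem every irreducible rational $G$-module has the form $L(\lambda_0) \otimes L(\lambda_1)^{(2)} \otimes \cdots$ with each $\lambda_i$ a $2$-restricted dominant weight. The Frobenius endomorphism of $G$ fixes every unipotent conjugacy class (one may take a representative with entries in $\mathbb{F}_2$), so the Jordan partition of $u$ on the twist $L(\lambda)^{(2^i)}$ equals that on $L(\lambda)$. Since the Jordan partition of a tensor product of two commuting unipotent operators is determined by the partitions of the factors, it is enough to check that $u$ and $u'$ have the same Jordan partition on each of the $2^5 = 32$ restricted irreducibles $L(\lambda)$.

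To locate the candidate pair, I would use the classification of unipotent classes of $\Sp_{2n}$ in characteristic $2$ (via decorated partitions) to enumerate pairs of non-conjugate classes of $G$ whose underlying partitions agree; such a pair already has matching Jordan partition on $L(\omega_1) = V$. For each candidate pair, take explicit representatives $u, u'$ as products of root-subgroup elements. For each restricted $\lambda$, realize $L(\lambda)$ as a composition factor of the Weyl module $V(\lambda)$, compute the actions of $u$ and $u'$ as explicit matrices, and read off their Jordan partitions from the ranks of $(u - 1)^k$. The counterexample is the pair that survives all $32$ tests, with non-conjugacy guaranteed by the extra Spaltenstein invariant distinguishing the decorations of the two partitions.

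The main obstacle is that Jordan forms on simple modules $L(\lambda)$ are not combinatorially controlled by the Jordan form on the natural module, and the Weyl modules for $C_5$ in characteristic $2$ have non-trivial composition series, so no clean invariant seems to replace the explicit check. The computation is finite but substantial and is naturally carried out with computer algebra (for instance via the MeatAxe in GAP or Magma to obtain the simple quotients of the $V(\lambda)$ and their Jordan structures). What must ultimately be established by this calculation is the striking coincidence that, specifically in rank $5$ and characteristic $2$, the composition data of the $L(\lambda)$ conspire to equalize the Jordan partitions of two genuinely non-conjugate unipotent classes.
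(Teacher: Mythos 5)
You have written a proof sketch of the wrong statement. The statement at hand is Conjecture \ref{conjecture:conjecture_more_general}, which quantifies over \emph{all} rational representations $f: G \rightarrow \GL(V)$, and the paper neither proves nor refutes it: it is explicitly left open. What you outline is instead the paper's counterexample to Conjecture \ref{conjecture:steinberg}, the version restricted to \emph{irreducible} representations --- two non-conjugate unipotent classes of $\Sp_{10}(K)$, $p=2$, with equal Jordan partitions on every $L_G(\lambda)$. Within that (different) task your outline is essentially the paper's: the reduction via Steinberg's tensor product theorem, Frobenius-stability of unipotent classes, and the fact that the Jordan type of a tensor product of unipotent matrices depends only on the factors' Jordan types and $p$; explicit representatives for the Hesselink classes $(2_1^2,6_3)$ and $(2_0^2,6_3)$; and machine computation of Jordan forms on the restricted irreducibles (the paper additionally shortens the list using $L_G(\lambda)\cong L_G(a_1\varpi_1+\cdots+a_4\varpi_4)\otimes L_G(a_5\varpi_5)$ and handles $\varpi_1+\varpi_2+\varpi_3+\varpi_4$ by a projectivity argument, Lemmas \ref{lemma:steinbergmodule_jordanblocks} and \ref{lemma:char2typeDsteinberg}).

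The genuine gap, if your argument is read as bearing on Conjecture \ref{conjecture:conjecture_more_general} in either direction, is the passage from irreducible to arbitrary rational representations. In characteristic $2$ rational $G$-modules are not completely reducible, and the Jordan form of a unipotent element on a module is not determined by its Jordan forms on the composition factors; hence agreement of $u$ and $u'$ on every $L_G(\lambda)$ does not show they agree on every rational module, so your construction does not produce a counterexample to the stated conjecture, nor of course a proof of it. The paper stresses exactly this in Remark \ref{remark:moregeneralconjecture}: there are non-semisimple modules for the finite group $\Sp_{10}(2)$ (inside $V\otimes V\otimes V$) on which $u$ and $u'$ have different Jordan types, although these do not extend to modules for the algebraic group, and whether some rational non-semisimple $G$-module separates $u$ and $u'$ is precisely the open question. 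Any attack on the stated conjecture must therefore control Jordan forms on extensions, not just on composition factors; note that Remark \ref{remark:otherways} shows $u$ and $u'$ also agree on all tilting modules, so a separating rational module, if it exists, cannot be detected by composition factors or tilting filtrations alone.
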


Obviously a counterexample to Conjecture \ref{conjecture:conjecture_more_general} would also give a counterexample to Conjecture \ref{conjecture:steinberg}. However,  since rational representations of $G$ are usually not completely reducible, it does not seem clear that the converse is true. Indeed, we do not know whether our counterexample to Conjecture \ref{conjecture:steinberg} gives a counterexample to Conjecture \ref{conjecture:conjecture_more_general} --- see Remark \ref{remark:moregeneralconjecture}.

The work done in this paper is strongly based on calculations implemented in MAGMA \cite{MAGMA}, and we discuss the computational aspects in Section \ref{section:computations}. Some computations similar to ours have also been done in previous work of Lawther \cite{Lawther}, who considered exceptional $G$ and the action of unipotent elements of $G$ on the adjoint and minimal modules of $G$. 

%It would be of interest to find a more conceptual explanation for our counterexample that does not rely on case-by-case calculations.

\section{Notation}

We fix the following notation and terminology. Throughout the text, let $K$ be an algebraically closed field of characteristic $p > 0$. All the groups that we consider are linear algebraic groups over $K$, and $G$ will always denote a simple linear algebraic group over $K$. We say that $p$ is \emph{good for $G$}, if $G$ is simple of type $A_l$; if $G$ is simple of type $B_l$, $C_l$, or $D_l$, and $p > 2$; if $G$ is simple of type $G_2$, $F_4$, $E_6$, or $E_7$, and $p > 3$; or if $G$ is simple of type $E_8$ and $p > 5$. Otherwise we say that the prime $p$ is \emph{bad for $G$}.

For the unipotent conjugacy classes in $G$, we will use the labeling given by the Bala-Carter classification of unipotent conjugacy classes \cite{BalaCarter1, BalaCarter2}, which is valid in good characteristic by Pommerening's theorem \cite{Pommerening1, Pommerening2}. When $G$ is simple of exceptional type and $p$ is bad for $G$, the labeling is extended to bad characteristic as well, although there are more unipotent classes and we label them as in \cite{Lawther}. See for example \cite[pp. 4128--4129]{Lawther} for further explanation.

We fix a maximal torus $T$ of $G$ with character group $X(T)$. Fix a base $\Delta = \{ \alpha_1, \ldots, \alpha_l \}$ for the root system of $G$, where $l$ is the rank of $G$. Here we use the standard Bourbaki labeling of the simple roots $\alpha_i$, as given in \cite[11.4, p. 58]{Humphreys}. We denote the dominant weights with respect to $\Delta$ by $X(T)^+$, and the fundamental dominant weight corresponding to $\alpha_i$ is denoted by $\varpi_i$. For a dominant weight $\lambda \in X(T)^+$, we can write $\lambda = \sum_{i = 1}^l a_i \varpi_i$ for unique integers $a_i \in \Z_{\geq 0}$. We say that $\lambda$ is \emph{$p$-restricted}, if $0 \leq a_i \leq p-1$ for all $1 \leq i \leq l$. The longest element in the Weyl group of $G$ is denoted by $w_0$.  

We use the notation $L_G(\lambda)$ for the irreducible $G$-module with highest weight $\lambda \in X(T)^+$, and denote by $V_G(\lambda)$ the Weyl module of highest weight $\lambda \in X(T)^+$. 

%If a $G$-module $V$ has a filtration $V = V_1 \supset V_2 \supset \cdots \supset V_{t} \supset V_{t+1} = 0$ such that for all $1 \leq i \leq t$ we have $V_i / V_{i+1} \cong V_G(\lambda_i)$ for some $\lambda_i \in X(T)^+$, we say that $V$ has a \emph{Weyl filtration}. We say that $V$ admits a \emph{good filtration} if the dual module $V^*$ admits a Weyl filtration, and we say that $V$ is \emph{tilting} if both $V$ and $V^*$ admit a Weyl filtration. For a dominant weight $\lambda \in X(T)^+$, we denote by $T_G(\lambda)$ the indecomposable tilting module of highest weight $\lambda$, which exists and is unique up to isomorphism by a result due to Ringel and Donkin \cite{Ringel} \cite{DonkinTilting}. 

%Let $F: G \rightarrow G$ be the Frobenius endomorphism induced by the field automorphism $x \mapsto x^p$ of $\fieldsymbol$, see for example \cite[Lemma 76]{SteinbergNotes}. For a $G$-module $V$ corresponding to the representation $\rho: G \rightarrow \GL(V)$, we denote the $G$-module corresponding to the representation $\rho \circ F^k: G \rightarrow \GL(V)$ by $V^{[k]}$. The $G$-module $V^{[k]}$ is called the $k$th \emph{Frobenius twist} of $V$.

Throughout, $V$ will denote a finite-dimensional vector space over $K$. Let $u \in \GL(V)$ be a unipotent linear map. It will often be convenient for us to describe the action of $u$ on a representation in terms of $K[u]$-modules. Suppose that $u$ has order $q = p^t$. Then there exist exactly $q$ indecomposable $K[u]$-modules which we will denote by $J_1$, $J_2$, $\ldots$, $J_q$. Here $\dim J_i = i$ and $u$ acts on $J_i$ as a full Jordan block. We use the notation $r \cdot J_n$ for the direct sum $J_n \oplus \cdots \oplus J_n$, where $J_n$ occurs $r$ times. For the Jordan form of $u$ acting on $V$ we will use the notation $(d_1^{n_1}, \ldots, d_t^{n_t})$ when $V \downarrow K[u] = n_1 \cdot J_{d_1} \oplus \cdots \oplus n_t \cdot J_{d_t}$ for integers $1 \leq d_1 < \cdots < d_t$ and $n_i > 0$.

Let $V$ be a $G$-module with associated representation $f: G \rightarrow \GL(V)$. We say that two elements $g, g' \in G$ \emph{act similarly on $V$} if $f(g)$ and $f(g')$ are conjugate in $\GL(V)$. If $f(g)$ and $f(g')$ are not conjugate in $\GL(V)$, we say that $f$ (or $V$) \emph{separates} the conjugacy classes of $g$ and $g'$.

\section{On computations}\label{section:computations}

When studying Conjecture \ref{conjecture:steinberg}, one has to consider the following problem.

\begin{prob}\label{prob:difficult}
Let $\lambda \in X(T)^+$ be a dominant weight and $g \in G$. What is the Jordan normal form of $g$ acting on $L_G(\lambda)$?
\end{prob}

Currently there is no general answer to Problem \ref{prob:difficult}. One basic obstacle here is that even determining the dimensions of the irreducible representations of $G$ is a difficult open problem that is expected to remain open for some time, so in general we cannot even expect an answer in the case where $g = 1$. However, given a specific $G$, element $g \in G$, and highest weight $\lambda$, one can use computational methods to construct the matrix of $g$ acting on $L_G(\lambda)$ with respect to some basis; standard algorithms then allow one to find the Jordan normal form of this matrix. We have used MAGMA for the computer calculations done in this paper, the purpose of this section is to outline the methods used.

Throughout this section we will assume that $\lambda$ is $p$-restricted, since with Steinberg's tensor product theorem \cite[Theorem 6.1]{SteinbergREP} any computation can be reduced to this case. Furthermore, we will assume that $g$ is unipotent since all of the computations in this paper are given in this case; similar computations are possible in the general case as well. Note that in the unipotent case the question is what are the Jordan block sizes and their multiplicities for $g$ acting on $L_G(\lambda)$.

\subsection{Chevalley construction}\label{subsec:chevalleycomp}

We first describe a general method, where the idea is to follow the construction of Chevalley groups from complex semisimple Lie algebras, as in \cite{SteinbergNotesAMS}. To begin with, we have algorithms to construct the irreducible representation of highest weight $\lambda$ for the corresponding Lie algebra in characteristic zero \cite{deGraaf_LIE}. Then by exponentiation and reduction modulo $p$, one can construct $V_G(\lambda)$ \cite[p. 1488]{CMT_computing}. This construction of $V_G(\lambda)$ has been implemented in MAGMA, and it allows one to find the action of each root element $x_{\alpha}(c)$ with respect to a fixed basis of weight vectors in $V_G(\lambda)$. Given $V_G(\lambda)$ in this manner, one can implement methods to construct $L_G(\lambda)$ as the unique simple quotient of $V_G(\lambda)$, or equivalently as the unique simple submodule of $V_G(-w_0 \lambda)^*$. For more details on this computation of $L_G(\lambda)$, see for example \cite[Section 5.5]{deGraaf_BOOK}. 

Finally, one can find a representative for the conjugacy class of $g$ in $G$ as a product of root elements of the form $x_\alpha(\pm 1)$ and thus compute the matrix of $g$ acting on $L_G(\lambda)$. For computations when $G$ is simple of exceptional type, we have used representatives from the tables in \cite{Simion}, which are based on \cite{LiebeckSeitzClass} and some computations due to Ross Lawther. For $G$ simple of classical type, representatives in terms of root elements of the form $x_{\alpha}(\pm 1)$ are well known, but do not seem to appear explicitly in the literature. For $p \neq 2$, see \cite[Lemma 2.24, Proof of Lemma 2.39]{SuprunenkoMin} for one method of finding representatives. With similar ideas, explicit representatives can be constructed in characteristic $p = 2$ as well, but we shall omit the details from this paper.

\subsection{Computing with finite groups of Lie type}\label{section:lietypecomp}

The computation based on the Chevalley construction works in general, but it requires the computation of a representation in characteristic zero, which in our setting can be slow and comes with a high memory cost for large-dimensional representations. For our purposes, it is sometimes better to start in positive characteristic $p > 0$ in the first place. Many of our computations have been done with the corresponding finite group of Lie type $G(p) := G(\mathbb{F}_p)$ in small characteristic (usually $p = 2$), and in this case matrix computations and the ``Meataxe'' algorithm are especially efficient \cite{MEATAXE_Parker} \cite{MEATAXE_HoltRees} \cite{MEATAXE_Holt} \cite{MEATAXE_IvanyosLux}. Note that since $g$ is unipotent, by replacing $g$ with a conjugate, we may assume that $g$ is contained in the finite group $G(p)$. This is well known, and follows for example from the fact mentioned above that the conjugacy class of $g$ in $G$ has a representative which is a product of root elements of the form $x_{\alpha}(\pm 1)$.

Also useful for us is the result of Steinberg \cite{Steinberg_GEN} that $G(p)$ can always be generated by two elements. This speeds up the computations and reduces the amount of memory needed compared to the Chevalley construction, where computations involving many root elements are needed. Explicit matrices corresponding to the generators provided by Steinberg are given in \cite{GEN_classical1}, \cite{GEN_classical2}, \cite{GEN_exceptional}, and have been implemented in MAGMA. This gives us $G(p) = \langle A, B \rangle$ for explicit matrices $A$ and $B$ acting on an $\mathbb{F}_p[G(p)]$-module $W$. For classical types $W$ is the natural module, while for exceptional types $W$ corresponds to the non-trivial Weyl module of minimal dimension.

We now describe how the action of $g$ on $L_G(\lambda)$ is computed. First, by another result due to Steinberg, every irreducible $KG$-module with $p$-restricted highest weight $\lambda$ restricts irreducibly to $G(p)$, and furthermore every irreducible $\mathbb{F}_p[G(p)]$-module is absolutely irreducible \cite[Corollary 7.5]{SteinbergREP}. Thus as $K[G(p)]$-modules we have $L_G(\lambda) \cong V \otimes_{\mathbb{F}_p} K$ for an irreducible $\mathbb{F}_p[G(p)]$-module $V$, so it will suffice to compute the matrix of $g$ acting on $V$ with respect to some basis.

Before computing $V$, as a preliminary step one should find a representative for the conjugacy class of $g$ in $G$ as a relatively short word in the generators $A$ and $B$. Then once we have computed matrices representing the action of $A$ and $B$ on $V$, it will be efficient to find the Jordan normal form of $g$ acting on $V$. For our computations, a simple random search among words in $A$ and $B$ has been sufficient. For each randomly chosen word $$w = A^{i_1} B^{j_1} A^{i_2} B^{j_2} \cdots$$ we test for conjugacy with $g$ by first checking if $w$ is unipotent with same order as $g$. Then if $w$ is unipotent, we can usually recognize the unipotent conjugacy class of $w$ in $G$ from its Jordan normal form on $W$, although in small characteristic some additional information is needed. For classical groups in characteristic $p = 2$, we need to use the corresponding alternating or quadratic form on the natural module, see \cite{Hesselink}. For $G$ simple of exceptional type, we have used \cite[Section 3]{LawtherFusion} to distinguish between the unipotent conjugacy classes. As an example, we have used this method in the case where $G$ is of type $E_8$ and $p = 2$ to find representatives for all unipotent conjugacy classes of $G$ as words in the generators $A$ and $B$ of $E_8(2)$.

For the construction of $V$, in the cases that we consider, we are able to construct $V$ as the unique composition factor of its dimension within certain modules involving tensor products and exterior powers of $W$. There are methods in the MAGMA library for computation of exterior powers and tensor products of $\mathbb{F}_p[G(p)]$-modules, and also an implementation of the Meataxe algorithm which allows one to compute the composition factors of a given $\mathbb{F}_p[G(p)]$-module. Finding $V$ as the unique composition factor of its dimension is justified by calculations with characters of $KG$-modules, and the dimensions and weight multiplicities of relevant $\mathbb{F}_p[G(p)]$-modules found in \cite{LubeckWebsite}. 

\FloatBarrier
\section{Simple groups of exceptional type}

In this section, we consider Conjecture \ref{conjecture:steinberg} in the case where $G$ is simple of exceptional type and $a, a' \in G$ are unipotent elements. In this setting, in good characteristic Conjecture \ref{conjecture:steinberg} is seen to hold from computations of Lawther in \cite{Lawther}. With further computations, using the methods described in Section \ref{section:computations}, we can extend this to bad characteristic. We find that the only potential counterexamples to Conjecture \ref{conjecture:steinberg} in the unipotent case occur when $p = 2$ and $G$ is simple of type $E_7$ or $E_8$.

\begin{lause}\label{thm:exceptional_computations}
Let $G$ be simple of exceptional type and let $u, u' \in G$ be unipotent elements. Suppose that $f(u)$ and $f(u')$ are conjugate in $\GL(V)$ for every rational irreducible representation $f: G \rightarrow \GL(V)$. Then either $u$ and $u'$ are conjugate in $G$, or one of the following holds:

\begin{enumerate}[\normalfont (i)]
\item $G$ is simple of type $E_7$, $p = 2$, and the unipotent conjugacy classes of $u$ and $u'$ are $A_5A_1$ and $D_6(a_2)$.
\item $G$ is simple of type $E_8$, $p = 2$, and the unipotent conjugacy classes of $u$ and $u'$ are one of the following:
\begin{itemize}
\item $D_5(a_1)A_1$ and $(D_4A_2)_2$,
\item $A_5A_1$ and $D_6(a_2)$,
\item $E_7(a_3)$ and $D_7(a_1)_2$,
\item $E_6A_1$ and $E_7(a_2)$,
\item $E_7(a_1)$ and $E_8(b_4)$.
\end{itemize}
\end{enumerate}
\end{lause}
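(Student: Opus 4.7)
The proof is a case-by-case computer verification using the techniques of Section~\ref{section:computations}. The first reduction is to $p$-restricted highest weights: by Steinberg's tensor product theorem any rational irreducible $KG$-module is a tensor product of Frobenius twists of $p$-restricted irreducibles, and since each unipotent conjugacy class has a representative in $G(\mathbb{F}_p)$ (so Frobenius fixes it) while Jordan types of commuting tensor factors combine multiplicatively, it suffices to establish the following: for every pair $(u,u')$ of non-conjugate unipotent elements of $G$ not appearing in (i)--(ii), there exists a $p$-restricted dominant weight $\lambda$ for which $u$ and $u'$ have distinct Jordan forms on $L_G(\lambda)$.

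The second reduction is to bad primes. In good characteristic, Lawther's tables in \cite{Lawther} record the Jordan form of every unipotent class representative on the adjoint and the minimal module, and direct inspection shows that at least one of these separates every pair of non-conjugate unipotent classes. What remain are the bad-prime pairs $(G,p)$: namely $p\in\{2,3\}$ for $G$ of type $G_2$, $F_4$, $E_6$, or $E_7$, and $p\in\{2,3,5\}$ for $E_8$. For each such $(G,p)$, the plan is to enumerate the unipotent classes via the labels of \cite{Lawther}, fix explicit class representatives (taking them from \cite{Simion} and the references therein), and then process a list of $p$-restricted irreducibles $L_G(\lambda)$ in order of increasing dimension --- beginning with the adjoint and minimal modules and moving to further small irreducibles, guided by the dimension and weight-multiplicity data in \cite{LubeckWebsite} --- computing the Jordan form of each class representative on each module until every pair not appearing in (i)--(ii) is separated.

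The principal obstacle is computational scale, most acutely for $E_8$ at $p=2$, where there are over a hundred unipotent classes, many pairs are not separated by the adjoint module, and the relevant $p$-restricted irreducibles can have dimension in the thousands. For this case I would rely on the finite-group route of Subsection~\ref{section:lietypecomp}: present $E_8(2)=\langle A,B\rangle$ on its $248$-dimensional minimal module via the two Steinberg generators, express each class representative as a short word in $A$ and $B$ (identifying its class from the Jordan form on the minimal module together with the invariants of \cite{LawtherFusion}), construct each required irreducible as the unique composition factor of its dimension inside an appropriate tensor or exterior power of the minimal module via the Meataxe, and then evaluate the word and read off its Jordan form. The delicate part is less any single computation than the bookkeeping across all unipotent class pairs: every pair still undistinguished after the current batch of modules must be revisited as the search is enlarged, until one either exhibits a separating module or confirms that the pair belongs to the small list recorded in (i)--(ii).
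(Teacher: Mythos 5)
Your proposal is correct and follows essentially the same route as the paper: reduce to a finite computer verification, dispose of good characteristic via Lawther's Jordan-form tables, and handle the bad primes by explicit MAGMA computations (including the finite-group/Meataxe method of Section~\ref{section:lietypecomp} for $E_8$, $p=2$), separating every pair of non-conjugate classes outside the list (i)--(ii). The only difference is organizational: the paper fixes $L_G(\beta)$ ($\beta$ the highest short root) as the primary separating module and then a single additional module per type (Tables~\ref{table:exceptional_separation} and~\ref{table:combined}), rather than an open-ended search through small $p$-restricted irreducibles.
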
 

\begin{proof}

To prove the theorem, without loss of generality we can assume that $G$ is simple of adjoint type. Consider the irreducible representation $L_G(\beta)$ with highest weight the highest short root $\beta$. Then except for the pairs of classes listed in Table \ref{table:exceptional_separation}, the representation $L_G(\beta)$ separates the unipotent conjugacy classes of $G$. Indeed, when $V_G(\beta) \cong L_G(\beta)$, this follows from the computations in \cite{Lawther}. If $V_G(\beta)$ is not irreducible, then it is well known that $(G,p)$ is $(G_2, 2)$, $(F_4, 3)$, $(E_6, 3)$, or $(E_7, 2)$; these cases can be handled by a computation with MAGMA (see Section \ref{subsec:chevalleycomp}).
 
What remains then is to consider the pairs of classes listed in Table \ref{table:exceptional_separation}. Except for the pairs listed in (i) and (ii), we find that the following irreducible representations separate the pairs of classes in Table \ref{table:exceptional_separation}: for type $G_2$, the representation $L_G(\varpi_2)$; for type $F_4$, the representation $L_G(\varpi_1)$; for type $E_6$, the representation $L_G(\varpi_1 + \varpi_6)$; for type $E_7$, the representation $L_G(\varpi_6)$; for type $E_8$, the representation $L_G(\varpi_1)$. This is seen by a computation with MAGMA, see Table \ref{table:combined}. In the case where $G$ is simple of type $F_4$ and $p = 3$, one could also refer to \cite[Table 4]{Lawther}.\end{proof}

\begin{remark}
We do not know whether the pairs listed in Theorem \ref{thm:exceptional_computations} give counterexamples to Conjecture \ref{conjecture:steinberg}. However, given the counterexample described in Section \ref{section:counterexample}, it would not be surprising if at least some of these pairs provide further counterexamples. This is further illustrated by our computer calculations, which we have used to verify that for $G = E_7$ and $p = 2$, unipotent elements in classes $A_5A_1$ and $D_6(a_2)$ act similarly on $L_G(\varpi_i)$ for $i \neq 4$. We have also verified that for $G = E_8$ and $p = 2$, the pairs of unipotent classes in Theorem \ref{thm:exceptional_computations} act similarly on $L_G(\varpi_7)$.
\end{remark}

\begin{table}[!htbp]
\centering
%\footnotesize
\renewcommand{\arraystretch}{1.2}
\caption{For adjoint $G$ simple of exceptional type, all pairs of unipotent conjugacy classes which are not separated by $L_G(\beta)$, where $\beta$ is the highest short root.}\label{table:exceptional_separation}
\begin{tabular}{| c | c | c l |}
\hline
$G$              & $\beta$ &  \multicolumn{2}{ l |}{Pairs of classes not separated by $L_G(\beta)$} \\
\hline
& & & \\[-1em]
$G_2$            & $\varpi_1$ & $p = 3$ & $(\tilde{A}_1)_3$, $\tilde{A}_1$ \\
& & &  \\
$F_4$            & $\varpi_4$ & $p = 3$ & $C_3$, $F_4(a_2)$ \\ 
                 &            &         & $\tilde{A_2}$, $\tilde{A_2}A_1$ \\
								 & & & \\
                 &            & $p = 2$ & $(\tilde{A_1})_2$, $\tilde{A_1}$ \\
                 &            &         & $(\tilde{A_2} A_1)_2$, $\tilde{A_2} A_1$ \\
								 &            &         & $(B_2)_2$, $B_2$ \\
                 &            &         & $(C_3(a_1))_2$, $C_3(a_1)$ \\			
																									
& & & \\
$E_6$            & $\varpi_2$ & $p = 2$ & $A_3A_1$, $A_2^2A_1$ \\
& & & \\
$E_7$            & $\varpi_1$ & $p = 3$ & $A_2A_1^3$, $A_2^2$ \\
                 & &                    & $D_5A_1$, $D_6(a_1)$ \\
								 & & & \\
								 &            & $p = 2$ & $D_6(a_2)$, $A_5A_1$ \\
								 &            &         & $A_3A_2$, $(A_3A_2)_2$ \\
& & & \\
$E_8$            & $\varpi_8$ & $p = 2$ & $A_5A_1$, $D_6(a_2)$ \\
                 &            &         & $D_5(a_1)A_1$, $(D_4A_2)_2$ \\				
                 &            &         & $D_4(a_1)A_1$, $(A_3A_2)_2$ \\	
							   &            &         & $D_6(a_1)$, $E_7(a_4)$ \\	
								 &            &         & $E_7(a_3)$, $(D_7(a_1))_2$ \\	
								 &            &         & $E_6A_1$, $E_7(a_2)$ \\	
								 &            &         & $E_7(a_1)$, $E_8(b_4)$ \\[5pt]	
\hline
\end{tabular}

\end{table}

\begin{table}[!htbp]
\centering
%\footnotesize
\renewcommand{\arraystretch}{1.2}
\caption{For adjoint $G$ simple of exceptional type, an irreducible $G$-module $L_G(\lambda)$ separating most pairs of unipotent classes listed in Table \ref{table:exceptional_separation}.}\label{table:combined}
\begin{tabular}{| c | c | c | l |}
\hline
$G$            & $\lambda$  & Class             & Jordan blocks on $L_G(\lambda)$ \\ \hline
& & & \\[-1em]
$G_2$, $p = 3$ & $\varpi_2$ & $(\tilde{A}_1)_3$ & $2^{2}$, $3$ \\
               &            & $\tilde{A}_1$ & $1^{3}$, $2^{2}$ \\
&&&\\											
							
$F_4$, $p = 3$ & $\varpi_1$ & $C_3$ & $1^{3}$, $3$, $6^{2}$, $8^{2}$, $9^{2}$ \\ 
               &            & $F_4(a_2)$ & $3^{2}$, $6^{2}$, $7$, $9^{3}$ \\ 
							&&&\\
               &            & $\tilde{A_2}$ & $1^{7}, 3^{15}$ \\
							 &            & $\tilde{A_2}A_1$ & $2^{2}, 3^{16}$ \\
&&&\\													
$F_4$, $p = 2$ & $\varpi_1$ & $(\tilde{A_1})_2$ & $1^{6}$, $ 2^{10}$ \\
               &            & $\tilde{A_1}$ & $1^{14}$, $ 2^{6}$ \\			
																										&&&\\
               &         & $(\tilde{A_2} A_1)_2$ & $2^{2}$, $ 3^{2}$, $ 4^{4}$ \\
               &         & $\tilde{A_2} A_1$ & $2^{6}$, $ 3^{2}$, $ 4^{2}$ \\	
													&&&\\
							 &         & $(B_2)_2$ & $2^{3}$, $4^{5}$ \\
						   &         & $B_2$ & $1^{4}$, $ 2$, $4^{5}$ \\		
							&&&\\
               &         & $(C_3(a_1))_2$ & $2^{3}$, $4^{5}$ \\		
							 &         & $C_3(a_1)$ & $1^{4}$, $2$, $4^{5}$ \\	
&&&\\							
$E_6$, $p = 2$ & $\varpi_1 + \varpi_6$ & $A_3A_1$ & $2^{10}$, $ 4^{138}$ \\			
               &         & 	$A_2^2A_1$ & $1^{4}$, $ 2^{6}$, $ 3^{4}$, $ 4^{136}$ \\			
&&&\\							
$E_7$, $p = 2$ & $\varpi_6$ & $A_3A_2$ & $2^{16}$, $3^{6}$, $4^{306}$ \\			
               &         & 	$(A_3A_2)_2$ & $2^{18}$, $3^{2}$, $4^{308}$ \\	
&&&\\
										
$E_7$, $p = 3$ & $\varpi_6$ & $A_2A_1^3$ & $3^{513}$ \\
							 &         &  $A_2^2$ & $1^{27}$, $3^{504}$ \\
							&&&\\
							 &         &  $D_5A_1$ & $1^{6}$, $ 3^{6}$, $ 6^{6}$, $ 8^{6}$, $ 9^{159}$ \\
							 &         &  $D_6(a_1)$ & $3^{8}$, $ 6^{8}$, $ 9^{163}$ \\
&&&\\

$E_8$, $p = 2$ & $\varpi_1$ & $D_4(a_1)A_1$ & $1^{12}, 2^{26}, 3^{14}, 4^{880}$ \\	
               &         & $(A_3A_2)_2$ & $1^{4}, 2^{30}, 3^{14}, 4^{880}$ \\	
							&&&\\
							 &         & $D_6(a_1)$ &  $1^{4}, 4^{15}, 6^{7}, 8^{440}$ \\	
						   &         &  $E_7(a_4)$ & $2^{2}, 4^{15}, 6^{7}, 8^{440}$ \\[5pt]	
							
							 %&         & $A_5A_1$ & \\
							 %&         & $D_6(a_2)$ & \\
							%
               %&         & $D_5(a_1)A_1$ & \\		
							 %&         & $(D_4A_2)_2$ & \\
										%
							 %&         & $E_7(a_3)$ & \\	
							 %&         & $(D_7(a_1))_2$ & \\
															%
							 %&         & $E_6A_1$ & \\	
							 %&         & $E_7(a_2)$ & \\	
							%
							 %&          & $E_7(a_1)$ & \\	
							 %&          & $E_8(b_4)$ & \\[5pt]								
							
\hline
\end{tabular}

\end{table}		
\FloatBarrier

\clearpage
\section{Counterexample}\label{section:counterexample}

\begin{table}[!htbp]
\centering
%\footnotesize
\renewcommand{\arraystretch}{1.2}
\caption{$G = C_5$, $p = 2$: Jordan block sizes of unipotent elements in classes $(2_1^2, 6_3)$ and $(2_0^2, 6_3)$ acting on $L_G(\lambda)$ for certain $\lambda \in X(T)^+$. Here we write $\lambda = a_1a_2a_3a_4a_5$ for $\lambda = a_1 \varpi_1 + a_2 \varpi_2 + a_3 \varpi_3 + a_4 \varpi_4 + a_5 \varpi_5$. The cases where $L_G(\lambda) \cong V_G(\lambda)$ have been marked with a (*).}\label{table:action_counterexample}
\begin{tabular}{| c | l | l |}
\hline
$\lambda$  & Jordan blocks on $L_G(\lambda)$ & $\dim L_G(\lambda)$ \\ \hline
$10000$    & $2^2, 6$ & $10^*$ \\
$01000$    & $1^2, 2^2, 6^5, 8$ & $44^*$ \\
$00100$    & $2^2, 6^8, 8^6$ & $100$ \\   
$00010$    & $2^8, 6^6, 8^{14}$ & $164$ \\
$00001$    & $2^4, 6^4$ & $32$ \\
$11000$    & $2^{16}, 6^{16}, 8^{24}$ & $320^*$ \\
$10100$    & $2^8, 4, 6^{23}, 8^{64}$ & $670$ \\
$10010$    & $2^{32}, 6^{32}, 8^{144}$ & $1408^*$ \\
$01100$    & $2^{36}, 6^{34}, 8^{304}$ & $2708$ \\
$01010$    & $2^6, 4^6, 6^{16}, 8^{374}$ & $3124$ \\
$00110$    & $2^{32}, 6^{32}, 8^{1072}$  & $8832$ \\
$11100$    & $2^{128}, 6^{128}, 8^{2112}$ & $17920^*$ \\
$11010$    & $2^{24}, 4^{36}, 6^{44}, 8^{2744}$ & $22408$ \\
$10110$    & $2^{38}, 4^{14}, 6^{54}, 7^{2}, 8^{6530}$ & $52710$ \\
$01110$    & $2^{64}, 6^{64}, 8^{22816}$ & $183040^*$ \\
$11110$    & $8^{131072}$ & $1048576^*$ \\
\hline
\end{tabular}
\end{table}

In this section, we give a counterexample to Conjecture \ref{conjecture:steinberg}. We begin with the following two lemmas.

\begin{lemma}\label{lemma:steinbergmodule_jordanblocks}
Let $\lambda = (p-1)\varpi_1 + \cdots + (p-1)\varpi_l$. Then for any unipotent element $u \in G$, we have $L_G(\lambda) \downarrow K[u] = N \cdot J_{p^k}$, where $p^k$ is the order of $u$ and $N = \dim L_G(\lambda) / p^k$.
\end{lemma}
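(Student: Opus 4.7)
The plan is to identify $L_G(\lambda) = L_G((p-1)\rho)$, where $\rho = \varpi_1 + \cdots + \varpi_l$, as the first Steinberg module, which has dimension $p^{|\Phi^+|}$ by Weyl's dimension formula, and then to exploit its classical projectivity as a module for the finite Chevalley group $G(\mathbb{F}_p)$. The statement is equivalent to the assertion that $L_G(\lambda) \downarrow K[\langle u \rangle]$ is a free $K[\langle u \rangle]$-module: since $\langle u \rangle$ is cyclic of order $p^k$ and $\mathrm{char}\, K = p$, free equals projective equals a direct sum of copies of $J_{p^k}$, with multiplicity forced by dimension to equal $N$.

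The first step is to replace $u$ by a $G$-conjugate lying in $U(\mathbb{F}_p)$, where $U$ is the unipotent radical of a fixed Borel subgroup of $G$. This is available because every unipotent class of $G$ has a representative in $G(\mathbb{F}_p)$ given as a product of root elements $x_\alpha(\pm 1)$ (see Section \ref{section:computations}), and every unipotent element of $G(\mathbb{F}_p)$ lies in some Sylow $p$-subgroup, all of which are $G(\mathbb{F}_p)$-conjugate to $U(\mathbb{F}_p)$.

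The main step is to invoke the classical theorem of Steinberg asserting that $L_G((p-1)\rho)$ is projective as a $K[G(\mathbb{F}_p)]$-module. Its restriction to the Sylow $p$-subgroup $U(\mathbb{F}_p)$ is then also projective, and since $U(\mathbb{F}_p)$ is a $p$-group in characteristic $p$, projective equals free; a dimension count $\dim L_G(\lambda) = p^{|\Phi^+|} = |U(\mathbb{F}_p)|$ forces an isomorphism $L_G(\lambda) \downarrow K[U(\mathbb{F}_p)] \cong K[U(\mathbb{F}_p)]$, the regular representation. Restricting this regular representation from $U(\mathbb{F}_p)$ further to the subgroup $\langle u \rangle$ yields a free $K[\langle u \rangle]$-module of rank $[U(\mathbb{F}_p) : \langle u \rangle] = p^{|\Phi^+| - k} = N$, i.e., $N$ copies of $J_{p^k}$, as claimed.

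I do not expect any serious obstacle here. The argument is essentially a chain of standard ``projective over a $p$-group equals free'' reductions combined with Steinberg's classical projectivity theorem for the first Steinberg representation of $G(\mathbb{F}_p)$; the main care required is in citing that theorem and the rationality of unipotent classes correctly.
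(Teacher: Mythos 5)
Your proof is correct and follows essentially the same route as the paper: both rest on Steinberg's theorem that the Steinberg module $L_G(\lambda)$ is projective on restriction to $K[G(\mathbb{F}_p)]$, together with the fact that projective modules over a $p$-group in characteristic $p$ are free (equivalently, sums of full Jordan blocks). The only difference is that the paper restricts directly from $G(\mathbb{F}_p)$ to $\langle u\rangle$ and observes that $J_{p^k}$ is the unique projective indecomposable $K[\langle u\rangle]$-module, so your intermediate step of conjugating $u$ into $U(\mathbb{F}_p)$ and identifying the restriction there as the regular representation via the dimension count $\dim L_G(\lambda)=p^{|\Phi^+|}=|U(\mathbb{F}_p)|$ is a harmless but unnecessary detour.
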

      
\begin{proof}Without loss of generality, we can assume that $u$ is contained in the finite subgroup $G(p) := G(\mathbb{F}_p)$. Note that $L_G(\lambda)$ is the Steinberg module for $G(p)$, and by a result due to Steinberg its restriction to $G(p)$ is projective \cite[Theorem 8.2]{SteinbergREP} \cite[Theorem 1]{BrauerNesbitt}, see also \cite[§3]{Humphreys_SteinbergREP}. Then the restriction to the subgroup $H = \langle u \rangle$ is also projective \cite[Theorem 6, p. 33]{Alperin}. Since $J_{p^k}$ is the only projective indecomposable $KH$-module, the claim follows.\end{proof}

\begin{lemma}\label{lemma:char2typeDsteinberg}
Assume that $p = 2$ and let $G$ be simple of type $C_l$, $l \geq 2$. Let $\lambda = \varpi_1 + \cdots + \varpi_{l-1}$. If $u \in G$ is a unipotent element of order $2^k > 2$, then $L_G(\lambda) \downarrow K[u] = N \cdot J_{2^k}$, where $N = \dim L_G(\lambda) / 2^k$.
\end{lemma}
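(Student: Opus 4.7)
The plan is to combine Lemma~\ref{lemma:steinbergmodule_jordanblocks} with the tensor product identification
\[
L_G(\rho) \;\cong\; L_G(\lambda) \otimes L_G(\varpi_l),
\]
where $\rho = \lambda + \varpi_l = \varpi_1 + \cdots + \varpi_l$ is the Steinberg weight for $G(\mathbb{F}_2)$. Since $L_G(\rho)$ is the unique composition factor of highest weight on the right-hand side, it suffices to compare dimensions. One has $\dim L_G(\rho) = 2^{l^2}$ (the $2$-part of $|\Sp_{2l}(\mathbb{F}_2)|$), and $\dim L_G(\varpi_l) = 2^l$ via the exceptional isogeny $\pi \colon C_l \to B_l$ in characteristic~$2$, under which $L_G(\varpi_l)$ is the pullback of the spin representation of $B_l$. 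These force $\dim L_G(\lambda) = 2^{l(l-1)}$, so the identity holds.

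Given the identity, Lemma~\ref{lemma:steinbergmodule_jordanblocks} implies that $L_G(\lambda) \otimes L_G(\varpi_l) \downarrow K[u]$ is a direct sum of $J_{2^k}$-blocks, hence projective as a $K\langle u\rangle$-module. Since $\langle u \rangle$ is a cyclic $2$-group, a standard support variety argument gives that a tensor product of two $K\langle u\rangle$-modules is projective if and only if at least one of the factors is projective (for such $\langle u \rangle$ the rank variety of every non-projective module is the whole one-point variety, and the variety of a tensor product is the intersection). Hence at least one of $L_G(\lambda) \downarrow K[u]$ or $L_G(\varpi_l) \downarrow K[u]$ consists only of $J_{2^k}$-blocks.

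The argument is then completed by induction on $k \geq 2$. For the inductive step $k \geq 3$, the element $u^2$ has order $2^{k-1} > 2$, so by the inductive hypothesis $L_G(\lambda) \downarrow K\langle u^2 \rangle$ consists only of $J_{2^{k-1}}$-blocks. On the other hand, since $u^2 = 1 + (u-1)^2$ in characteristic~$2$, a direct calculation shows that each Jordan block $J_i$ of $\langle u \rangle$ restricts to $\langle u^2 \rangle$ as $J_{\lceil i/2 \rceil} \oplus J_{\lfloor i/2 \rfloor}$. For the restriction to consist only of $J_{2^{k-1}}$-blocks, every Jordan block of $u$ on $L_G(\lambda)$ must have size exactly $2^k$, which is the desired conclusion.

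The main obstacle is the base case $k = 2$. The support variety dichotomy alone leaves open the possibility that $L_G(\varpi_l) \downarrow K[u]$ is the projective factor while $L_G(\lambda) \downarrow K[u]$ contains smaller Jordan blocks. Ruling this out requires showing that for every unipotent $u \in G$ of order~$4$, the restriction $L_G(\varpi_l) \downarrow K\langle u \rangle$ is not projective; equivalently, that the involution $u^2$ has strictly more than $2^{l-1}$ fixed vectors on the spin representation $L_G(\varpi_l)$. Establishing this bound via the explicit action of involutions on the spin module through the exceptional isogeny is the technical heart of the argument.
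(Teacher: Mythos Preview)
Your approach is genuinely different from the paper's, and the gap you flag at the end is real: the proof is incomplete because the base case $k=2$ is never established. Your support-variety dichotomy is correct, but it only yields the conclusion for $u$ of order $4$ provided $L_G(\varpi_l)\downarrow K[u]$ is \emph{not} projective, and you give no argument for this. Since every $k\geq 3$ reduces by your induction to $k-1$, the whole chain rests on the unproven $k=2$ case. (There is also a minor circularity in your justification of the tensor identity: knowing $\dim L_G(\rho)$ and $\dim L_G(\varpi_l)$ gives only $\dim L_G(\lambda)\geq 2^{l(l-1)}$, not equality. The identity is nonetheless true and is exactly Steinberg's factorisation theorem for $C_l$ in characteristic $2$, which the paper cites elsewhere; you should simply invoke that.)

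The paper's argument sidesteps the base-case difficulty entirely. By a theorem of Dye, every unipotent $u\in \Sp(V)$ lies in some orthogonal subgroup $\operatorname{O}(V,Q)$, so $u^2$ lies in the connected component $H^\circ=\SO(V,Q)$, a subsystem subgroup of type $D_l$. By a branching result of Seitz, $L_G(\lambda)\downarrow H^\circ$ is irreducible with highest weight the sum of all fundamental weights of $D_l$, i.e.\ it \emph{is} the Steinberg module for $H^\circ$. Lemma~\ref{lemma:steinbergmodule_jordanblocks} applied to $u^2\in H^\circ$ (of order $2^{k-1}$, which may be $2$) then gives $L_G(\lambda)\downarrow K[u^2]=N'\cdot J_{2^{k-1}}$, and the conclusion for $u$ follows by the same block-restriction computation you use in your inductive step. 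The key advantage is that Lemma~\ref{lemma:steinbergmodule_jordanblocks} handles unipotents of \emph{all} orders, so no separate base case is needed.
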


\begin{proof}
Without loss of generality we can assume that $G = \Sp(V, \beta)$, where $V$ is a vector space of dimension $2l$ and $\beta$ is a non-degenerate alternating bilinear form on $V$. It follows from \cite[Theorem 5]{Dye} that $u$ is contained in $H = \operatorname{O}(V, Q)$ for some non-degenerate quadratic form $Q$ which polarizes to $\beta$. Then we have $u^2 \in H^\circ = \SO(V, Q)$ since $H^\circ$ has index $2$ in $H$. 

We can also see $H^\circ$ as a subsystem subgroup of type $D_l$ in $G$, since $H^\circ$ is generated by short root subgroups of $G$. From this point of view, it follows from \cite[Theorem 4.1]{SeitzClassical} that the restriction of $L_G(\lambda)$ to $H^\circ$ is irreducible, and that its highest weight is the sum of all fundamental dominant weights of $H^\circ$. Hence $L_G(\lambda) \downarrow K[u^2] = N' \cdot J_{2^{k-1}}$ by Lemma \ref{lemma:steinbergmodule_jordanblocks}, where $N' = \dim L_G(\lambda) / 2^{k-1}$. Thus $u$ cannot act on $L_G(\lambda)$ with any Jordan blocks of size $< 2^k$. Since $u$ has order $2^k$, we conclude that $L_G(\lambda) \downarrow K[u] = N \cdot J_{2^k}$, where $N = \dim L_G(\lambda) / 2^k$.\end{proof}

For the rest of this section, let $K$ be an algebraically closed field of characteristic $p = 2$. Set $V = K^{10}$ and define $\beta: V \times V \rightarrow K$ to be the non-degenerate alternating bilinear form given by the anti-diagonal matrix $$\begin{pmatrix}
0  &          & 1 \\
  & \reflectbox{$\ddots$} & \\
1 &          & 0 \\
\end{pmatrix}.$$ We consider $$G = \Sp_{10}(K) = \{ A \in \GL_{10}(K) : \beta(Av, Aw) = \beta(v,w) \text{ for all } v, w \in V\}.$$ Note that $G$ is a simple linear algebraic group and simply connected of type $C_5$. Our counterexample is given by two non-conjugate unipotent elements of $G$, so we will have to first make some remarks about unipotent conjugacy classes in $G$.

The unipotent conjugacy classes of the symplectic group in characteristic $2$ were described by Hesselink in \cite{Hesselink}. Let $u \in G$ be a unipotent element of $G$ and set $X = u-1$. The results of Hesselink \cite[Theorem 3.8, Section 3.9]{Hesselink} show that the conjugacy class of $u$ in $G$ is uniquely determined by the tuple $$({d_1}_{\chi(d_1)}^{n_1}, \ldots, {d_t}_{\chi(d_t)}^{n_t}),$$ where $1 \leq d_1 < \cdots < d_t$ are the Jordan block sizes of $u$ on $V$, with block size $d_i$ having multiplicity $n_i$, and $$\chi(m) = \min \{ n \geq 0 : \beta(X^n v, X^{n+1} v) = 0 \text { for all } v \in \operatorname{Ker} X^{m} \}.$$

Our counterexample is given by the pair of unipotent classes of $G$ corresponding to the tuples $(2_{1}^2, 6_3)$ and $(2_{0}^2, 6_3)$, and we will demonstrate that unipotent elements in these two conjugacy classes act similarly on every rational irreducible $KG$-module $L_G(\lambda)$. 

Before proving this, we give representatives for these two unipotent conjugacy classes in $\Sp_{10}(2)$. Denote by $E_{i,j}$ the $10 \times 10$ matrix with $1$ at the $(i,j)$ position and $0$ elsewhere.  The generators of $\Sp_{10}(2) < G$ from \cite{GEN_classical1} are given by $\Sp_{10}(2) = \langle A, B \rangle$, where $A = I + E_{1,5} + E_{1,10} + E_{6,10}$ and $B = E_{2,1} + E_{3,2} + E_{4,3} + E_{5,4} + E_{10,5} + E_{9,10} + E_{8,9} + E_{7,8} + E_{6,7} + E_{1,6}$.

With a computer search, we have found that the elements $$u = B^4 A B^6 A B^5 A$$ and $$u' = B A B^2 A B^4 A B^3 A$$ lie in the unipotent conjugacy classes of $G$ associated with the tuples $(2_1^2, 6_3)$ and $(2_0^2, 6_3)$, respectively. We now give our main result.

\begin{lause}\label{thm:mainresult}
Let $G$ and $u, u' \in G$ be as above. Then $u$ and $u'$ are not conjugate in $G$, but $f(u)$ and $f(u')$ are conjugate in $\GL(V)$ for every rational irreducible representation $f: G \rightarrow \GL(V)$.
%The unipotent elements $u$ and $u'$ act similarly on $L_G(\lambda)$ for all $\lambda \in X(T)^+$.
\end{lause}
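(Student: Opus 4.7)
The plan is to verify the two assertions of the theorem separately. Non-conjugacy is immediate from Hesselink's classification~\cite{Hesselink}: the tuples $(2_1^2,6_3)$ and $(2_0^2,6_3)$ are distinct invariants of the respective $G$-conjugacy classes, so $u \not\sim u'$ in $G$.

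For the assertion that $u, u'$ act similarly on every rational irreducible, I would first reduce to $2$-restricted weights via Steinberg's tensor product theorem. Writing $\lambda = \lambda_0 + 2\lambda_1 + 4\lambda_2 + \cdots$ with each $\lambda_i$ being $2$-restricted, one has $L_G(\lambda) \cong L_G(\lambda_0) \otimes L_G(\lambda_1)^{[1]} \otimes \cdots$, and $u$ acts on the $i$-th tensor factor as $u^{2^i}$ acts on $L_G(\lambda_i)$. Since the Jordan form of an element on a tensor product of $K[u]$-modules is determined by its Jordan form on each factor, it suffices to show that $u^{2^i}$ and $(u')^{2^i}$ act similarly on every $2$-restricted $L_G(\mu)$ for each $i \geq 0$. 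Both $u$ and $u'$ have order $8$, so only $i \in \{0,1,2\}$ matter, and the cases $i = 1, 2$ can be handled at the level of the natural module: compute the Jordan forms and Hesselink $\chi$-invariants of $u^{2^i}$ and $(u')^{2^i}$ on $V$ and check that these agree, so that $u^{2^i}$ and $(u')^{2^i}$ are actually $G$-conjugate and the $i = 1, 2$ constraints become trivial. This reduces the problem to the single case $i = 0$: show $u$ and $u'$ have matching Jordan forms on each of the $31$ non-trivial $2$-restricted irreducibles $L_G(\lambda)$.

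The sixteen weights in Table~\ref{table:action_counterexample} are handled by direct MAGMA computation as described in Section~\ref{section:computations}: realize $u$ and $u'$ as the explicit words in the generators $A, B$ of $\Sp_{10}(2)$ given just above the theorem, construct each $L_G(\lambda)$ as the unique composition factor of its dimension inside a suitable tensor/exterior construction on the natural module, and read off the Jordan normal forms of the images. The structural weights $\varpi_1 + \cdots + \varpi_4$ and $\varpi_1 + \cdots + \varpi_5$ are already handled abstractly by Lemmas~\ref{lemma:char2typeDsteinberg} and~\ref{lemma:steinbergmodule_jordanblocks} respectively: in both cases the Jordan form of a unipotent element of order $8$ is forced to be a fixed number of copies of $J_8$, so $u$ and $u'$ automatically agree there.

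The hard part is the remaining fourteen $2$-restricted weights $\lambda$, all of which satisfy $a_5 = 1$ and differ from both $\varpi_5$ and the Steinberg weight. For these I would try two strategies in parallel: first, use Seitz's theorem on irreducible restrictions to identify each $L_G(\lambda)$ as a Steinberg-type module for a suitable reductive subgroup $H \subset G$, so that an analogue of Lemma~\ref{lemma:char2typeDsteinberg} forces the Jordan form to depend only on the order of the unipotent element; second, express $L_G(\lambda)$ as the unique composition factor of its dimension in a tensor product of modules already handled, and deduce matching Jordan block data from matching data on the constituents. Any weights resistant to both approaches would be dispatched by direct MAGMA computation as in the table cases. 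This last step is the principal technical difficulty: tensor-product decompositions in this characteristic rarely produce irreducible summands, and a Seitz-type identification may not cover all fourteen weights cleanly, so some combination of structural and computational work is likely unavoidable.
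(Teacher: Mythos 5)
Two issues, one of them a genuine gap. First, your reduction via the tensor product theorem contains a false step: you assert that $u$ acts on the $i$-th twisted factor $L_G(\lambda_i)^{[i]}$ as the power $u^{2^i}$ acts on $L_G(\lambda_i)$. The Frobenius twist composes the representation with the Frobenius morphism, so $u$ acts there as $F^i(u)$ (matrix entries raised to the $2^i$-th power), not as the group-theoretic power $u^{2^i}$; otherwise a transvection of order $2$ would act trivially on every twist with $i \geq 1$, which is absurd. Since the representatives $u, u'$ lie in $\Sp_{10}(2)$, one has $F^i(u) = u$ and $F^i(u') = u'$, so the correct reduction is simply that it suffices to compare $u$ and $u'$ themselves on the $2$-restricted irreducibles; your detour through conjugacy of $u^2, (u')^2$ and $u^4, (u')^4$ in $G$ is both unnecessary and not what the tensor product theorem requires.

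The substantive gap is exactly at the point you flag as the principal difficulty: the fourteen $2$-restricted weights with $a_5 = 1$ other than $\varpi_5$ and the Steinberg weight. The paper disposes of all of these at once by Steinberg's theorem \cite[Theorem 11.1]{SteinbergREP} (a consequence of the special isogeny in type $C_l$, $p=2$), which gives $L_G(a_1\varpi_1 + \cdots + a_4\varpi_4 + a_5\varpi_5) \cong L_G(a_1\varpi_1 + \cdots + a_4\varpi_4) \otimes L_G(a_5\varpi_5)$ for $a_i \in \{0,1\}$; hence matching Jordan forms on $L_G(\varpi_5)$ and on the fifteen modules supported on $\varpi_1,\dots,\varpi_4$ (the sixteen rows of Table \ref{table:action_counterexample}, with $\varpi_1+\cdots+\varpi_4$ covered by Lemma \ref{lemma:char2typeDsteinberg}) already settle every $2$-restricted weight. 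Your proposed substitutes do not close this. A Seitz-type restriction argument of the kind used in Lemmas \ref{lemma:steinbergmodule_jordanblocks} and \ref{lemma:char2typeDsteinberg} only applies to the Steinberg-like weights already handled. Identifying $L_G(\lambda)$ as the unique composition factor of its dimension in a tensor product does not let you ``deduce matching Jordan block data from matching data on the constituents'': the tensor product is not semisimple, and the Jordan form on a composition factor is not determined by the Jordan form on the whole module (compare Remark \ref{remark:moregeneralconjecture}), so one must still compute the action on that factor. Finally, the brute-force fallback is not realistic for these weights: by the factorization above, $\dim L_G(\varpi_2+\varpi_3+\varpi_4+\varpi_5) = 183040 \cdot 32 = 5857280$, far beyond a direct Jordan-form computation. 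As written, your argument therefore does not go through for those fourteen weights; inserting the factorization from \cite[Theorem 11.1]{SteinbergREP} repairs it and is precisely what the paper does.
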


\begin{proof}By Steinberg's tensor product theorem \cite[Theorem 6.1]{SteinbergREP}, it is clear that it will be enough to prove that $u$ and $u'$ act similarly on $L_G(\lambda)$ for all $2$-restricted $\lambda \in X(T)^+$. Furthermore, it follows from \cite[Theorem 11.1]{SteinbergREP} that when $G$ is simple of type $C_l$, we have $$L_G(a_1\varpi_1 + \cdots + a_{l-1}\varpi_{l-1} + a_l \varpi_l) \cong L_G(a_1\varpi_1 + \cdots + a_{l-1}\varpi_{l-1}) \otimes L_G(a_l \varpi_l)$$ for all $a_i \in \{0,1\}$. Consequently, it will be enough to check that $u$ and $u'$ act similarly on $L_G(\varpi_5)$ and $L_G(a_1 \varpi_1 + a_2 \varpi_2 + a_3 \varpi_3 + a_{4}\varpi_{4})$ for all $a_i \in \{0,1\}$. In most cases, we do this by a computer calculation (see Section \ref{section:lietypecomp}), and we have given the Jordan block sizes of $u$ and $u'$ acting on these irreducible $KG$-modules in Table \ref{table:action_counterexample}. There are also alternative approaches in some cases, which we explain in Remark \ref{remark:otherways} below.

First note that the fact that $u$ and $u'$ act on $L_G(\varpi_1 + \varpi_2 + \varpi_3 + \varpi_4)$ with all Jordan blocks of size $8$ follows from Lemma \ref{lemma:char2typeDsteinberg}. For the rest of the cases, we have computed the Jordan block sizes of $u$ and $u'$ on $L_G(\lambda)$ by computing their Jordan block sizes on the corresponding irreducible $\mathbb{F}_2[\Sp_{10}(2)]$-module, which we also denote here by $L_G(\lambda)$ for clarity. 

On the natural module $V \cong L_G(\varpi_1)$ both $u$ and $u'$ act with Jordan blocks $(2^2, 6)$. For $2 \leq i \leq 5$, the exterior power $\wedge^i(V)$ has highest weight $\varpi_i$ and the weight $\varpi_i$ occurs with multiplicity $1$. With a computation in MAGMA, we find $L_G(\varpi_i)$ as the unique composition factor of its dimension in $\wedge^i(V)$. Here we use the list of dimensions of irreducible $KG$-modules with $2$-restricted highest weight provided by Frank L\"ubeck in \cite{LubeckWebsite}, these dimensions are also listed in Table \ref{table:action_counterexample} for convenience. By ``finding $L_G(\varpi_i)$'', we mean that we find matrices representing the action of the generators $A$ and $B$ of $\Sp_{10}(K)$ on $L_G(\varpi_i)$, which then allows us to compute the corresponding matrices for $u$ and $u'$ and their Jordan normal forms. In this manner, one can verify that $u$ and $u'$ act on $L_G(\varpi_i)$ with Jordan blocks as given in Table \ref{table:action_counterexample}.

For $1 \leq i < j \leq 4$, we compute $L_G(\varpi_i + \varpi_j)$ as the unique composition factor of its dimension in the tensor product of $L_G(\varpi_i)$ and $L_G(\varpi_j)$, this is again justified by the character of the tensor product and the list of dimensions. Finally, in a similar manner $L_G(\varpi_i + \varpi_j + \varpi_k)$ for $1 \leq i < j < k \leq 4$ is found as the unique composition factor of its dimension in the tensor product of $L_G(\varpi_i)$ and $L_G(\varpi_j + \varpi_k)$. This is computationally more demanding for $L_G(\omega_2 + \omega_3 + \omega_4)$, but this case can be treated differently, see Remark \ref{remark:otherways}.\end{proof}

\begin{remark}\label{remark:moregeneralconjecture}We do not know whether our example gives a counterexample to Conjecture \ref{conjecture:conjecture_more_general}, and it does not seem obvious that this should be the case. Indeed, given a rational $KG$-module $Z$ which is not semisimple, the amount of information given by the action of a unipotent element on the composition factors of $Z$ seems to be rather limited. Thus it is not clear how to move from the irreducible case to the general case.

We also note here that one can find non-irreducible $K[\Sp_{10}(2)]$-modules $Z$ where $u$ and $u'$ above act with different Jordan block sizes. For example, several such $Z$ can be found as a submodule of $V \otimes V \otimes V$, where $V$ is the natural module as above. However, it turns out in this case that $Z$ is not a $K[\Sp_{10}(4)]$-module, so in particular $Z$ is not a $K[\Sp_{10}(K)]$-module.\end{remark}

\begin{remark}\label{remark:otherways}There are also different ways to deduce that $u$ and $u'$ act similarly on $L_G(\lambda)$ for some of the $\lambda \in X(T)^+$ considered in the proof of Theorem \ref{thm:mainresult}. For example, since $u$ and $u'$ both act with Jordan blocks $(2^2, 6)$ on $V$, it follows that $u$ and $u'$ act similarly on the exterior power $\wedge^i(V)$ for all $1 \leq i \leq 5$. From the fact that $\wedge^i(V)$ is a tilting $G$-module of highest weight $\varpi_i$ \cite[Appendix A]{DonkinTiltingSP}, we conclude that for all $1 \leq i \leq 5$, the elements $u$ and $u'$ act similarly on the indecomposable tilting $G$-module with highest weight $\varpi_i$. Now using the fact that a tensor product of two tilting modules is tilting \cite{Mathieu}, one finds that $u$ and $u'$ act similarly on every tilting $G$-module.

In particular, we conclude that $u$ and $u'$ act similarly on $L_G(\lambda)$ if $L_G(\lambda)$ is tilting. It is well known that $L_G(\lambda)$ is tilting if and only if $\dim L_G(\lambda) = \dim V_G(\lambda)$, and this equality can be checked with the dimensions of $L_G(\lambda)$ provided in \cite{LubeckWebsite} and the dimension formula for the Weyl module $V_G(\lambda)$, which has been implemented in MAGMA. We have marked the cases where $L_G(\lambda)$ is tilting in Table \ref{table:action_counterexample} with a star (*).\end{remark}

With further computations, using the method from Section \ref{subsec:chevalleycomp}, we can verify that in the case of unipotent elements, the counterexample to Conjecture \ref{conjecture:steinberg} given by Theorem \ref{thm:mainresult} is essentially the only one for simple $G$ of rank at most $5$. More precisely, we have the following result.

\begin{lause}\label{thm:rankatmost5}
Let $G$ be simple of rank at most $5$ and let $u, u' \in G$ be unipotent elements. Suppose that $f(u)$ and $f(u')$ are conjugate in $\GL(V)$ for every rational irreducible representation $f: G \rightarrow \GL(V)$. Then either $u$ and $u'$ are conjugate in $G$, or the following hold:

\begin{enumerate}[\normalfont (i)]
\item $p = 2$ and there exists an isogeny $\varphi: \Sp_{10}(K) \rightarrow G$, and
\item the unipotent conjugacy classes in $\varphi^{-1}(u)$ and $\varphi^{-1}(u')$ are those associated with the tuples $(2_1^2, 6_3)$ and $(2_0^2, 6_3)$.
\end{enumerate}

\end{lause}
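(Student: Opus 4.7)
The plan is to enumerate the simple algebraic groups $G$ of rank at most $5$ and verify case by case that any two non-conjugate unipotent elements $u, u' \in G$ are distinguished by some irreducible rational representation, except in the single case identified in the statement. Up to isogeny, the simple types of rank at most $5$ are $A_l$ ($l \leq 5$), $B_l$ ($2 \leq l \leq 5$), $C_l$ ($3 \leq l \leq 5$), $D_l$ ($4 \leq l \leq 5$), $G_2$, and $F_4$; the exceptional types are already handled by Theorem \ref{thm:exceptional_computations}. For each classical type, unipotent conjugacy classes correspond bijectively under central isogenies, so the list of classes depends only on the type, although the available pool of irreducible representations depends on the isogeny class.

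For type $A_l$, unipotent conjugacy is determined by the Jordan type on the natural module $L_G(\varpi_1)$, which is irreducible, so there is nothing to prove. For types $B_l$, $C_l$, $D_l$ in good characteristic $p \neq 2$, the natural module determines Jordan type and hence conjugacy in all cases except the ``very even'' pairs of classes in type $D_l$; these are interchanged by the graph automorphism and can be separated by either half-spin representation $L_G(\varpi_{l-1})$ or $L_G(\varpi_l)$, as can be verified directly for the $D_4$ and $D_5$ cases at hand. This disposes of all classical types in odd characteristic.

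The substantial work is in bad characteristic $p = 2$ for types $B_l$, $C_l$, $D_l$ with $l \leq 5$. Here Hesselink's classification recalled in Section \ref{section:counterexample} parametrizes unipotent classes by tuples with discrete labels. For each such $G$, the Jordan type on the natural module already separates most pairs of non-conjugate unipotent classes, reducing the problem to a finite list of pairs sharing a Jordan type but differing in their Hesselink labels. For each such pair, the methods of Section \ref{section:computations} --- most naturally the Chevalley-construction method of Section \ref{subsec:chevalleycomp} for these relatively small groups --- are used to compute the Jordan blocks on successive small irreducible representations $L_G(\varpi_i)$, $L_G(\varpi_i + \varpi_j)$, and so on, until a separating representation is found.

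The main obstacle is the sheer volume of case analysis, which grows quickly with rank. At rank $5$, type $C_5$ is the delicate one: Hesselink's labels produce several close pairs, and for all but the pair $\{(2_1^2, 6_3), (2_0^2, 6_3)\}$ a small-weight irreducible representation separates, while the exceptional pair is not separated by any irreducible representation by Theorem \ref{thm:mainresult}. The lower-rank cases (types $B_2$--$B_5$, $C_3$--$C_4$, $D_4$--$D_5$ in characteristic $2$) each admit a similar analysis with fewer candidate pairs, but collectively still require extensive bookkeeping. With the MAGMA infrastructure of Section \ref{section:computations} in place, the verification reduces to a finite, if lengthy, computer calculation.
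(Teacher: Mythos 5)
Your overall strategy (case-by-case over the types of rank at most $5$, with the exceptional types delegated to Theorem \ref{thm:exceptional_computations} and the rest settled by computing Jordan blocks on small irreducible modules until a separating one is found) is the same as the paper's. But there is a genuine gap in how you handle the isogeny classes. You note in passing that ``the available pool of irreducible representations depends on the isogeny class,'' and then ignore this: your separating modules are precisely ones that do not exist for the smaller pools. The natural module $L_G(\varpi_1)$ is not a rational representation of $\mathrm{PGL}_{l+1}$ (type $A_l$), of adjoint $C_l$, or of the non-simply-connected forms of $D_l$ (for $C_l$ and $D_l$, $\varpi_1$ is not in the root lattice), and the half-spin weights $\varpi_{l-1},\varpi_l$ you invoke for the very even $D_l$ classes are likewise unavailable for the adjoint form. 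Since the hypothesis of the theorem is weaker for a quotient form (fewer irreducible representations to separate with), your argument as written only proves the statement for forms admitting the natural module, e.g.\ the simply connected groups; it does not rule out further non-separated pairs for adjoint $A_l$, $C_l$, $D_l$ --- and the theorem's conclusion, phrased via an isogeny $\varphi:\Sp_{10}(K)\rightarrow G$, explicitly must cover such quotients.

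The paper closes exactly this gap by first reducing to $G$ of adjoint type (legitimate because unipotent elements, their conjugacy, and the representations of the adjoint group all pull back along the central isogeny), and then exhibiting separating representations whose highest weights lie in the root lattice: for $A_l$, $L_G(\varpi_1+\varpi_l)$, $L_G(\varpi_2+\varpi_{l-1})$ and, for the one bad pair with $l=3$, $p=2$, $L_G(4\varpi_1)$; for $C_l$ with $p>2$, $L_G(2\varpi_1)$ or $L_G(\varpi_2)$, and with $p=2$, $L_G(\varpi_i)$ ($i$ even) or $L_G(2\varpi_i)$ ($i$ odd); for $D_l$, $L_G(2\varpi_1)$, $L_G(\varpi_2)$, $L_G(2\varpi_3)$, plus $L_G(4\varpi_4)$ for one pair with $l=5$, $p=2$. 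It also disposes of $B_l$ in characteristic $2$ not by a fresh computation but via the exceptional isogeny $C_l\rightarrow B_l$ of \cite[Theorem 28]{SteinbergNotesAMS}, reducing to the $C_l$ case --- a shortcut your plan misses, though your direct computation would in principle also work once the adjoint-form issue is addressed. To repair your proof you would need either the reduction to adjoint type together with separating weights in the root lattice, or a separate verification for every isogeny form; in addition, your claim that the half-spin modules separate the very even classes in $D_4$, $D_5$ is asserted rather than checked, and in the paper's framework those pairs are instead handled by the root-lattice weights above.
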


\begin{proof}Without loss of generality we can assume that $G$ is simple of adjoint type. Our proof consists mostly of a computation with MAGMA (see Section \ref{subsec:chevalleycomp}) for all possible types of $G$. For the exceptional types that we need to consider (type $G_2$ and type $F_4$) the claim already follows from Theorem \ref{thm:exceptional_computations}, so what remains is to consider the classical types.

In the case where $G$ is simple of type $A_l$ with $1 \leq l \leq 5$, by a computation one can verify that except for one pair in the case where $l = 3$ and $p = 2$, all pairs of unipotent conjugacy classes are separated by $L_G(\varpi_1 + \varpi_l)$ or $L_G(\varpi_2 + \varpi_{l-1})$. The pair of unipotent conjugacy classes in the case where $l = 3$ and $p = 2$ is separated by $L_G(4 \varpi_1)$.

Suppose that $G$ is simple of type $B_l$ with $3 \leq l \leq 5$. When $p > 2$, the unipotent conjugacy classes are separated by the natural representation $L_G(\varpi_1)$, see \cite[Proposition 2 of Chapter II]{Gerstenhaber}. If $p = 2$, by \cite[Theorem 28]{SteinbergNotesAMS} there exists an exceptional isogeny $\varphi: G' \rightarrow G$ where $G'$ is simple of type $C_l$, so the result will follow from the type $C_l$ case which we treat next.

Let $G$ be simple of type $C_l$ with $2 \leq l \leq 5$. When $p > 2$, with a computation one can verify that all pairs of unipotent conjugacy classes are separated by $L_G(\varpi_2)$ or $L_G(2\varpi_1)$. For $p = 2$, one computes that except for the pair of classes corresponding to (ii) in the theorem, all pairs of conjugacy classes are separated by some $L_G(\varpi_i)$ ($i$ even) or $L_G(2\varpi_i)$ ($i$ odd).

Finally for $G$ simple of type $D_l$ with $4 \leq l \leq 5$, every pair of unipotent conjugacy classes is separated by $L_G(2\varpi_1)$, $L_G(\varpi_2)$, or $L_G(2\varpi_3)$; except for one pair which occurs in the case where $p = 2$ and $l = 5$. This pair of unipotent conjugacy classes is separated by $L_G(4\varpi_4)$.\end{proof}

\FloatBarrier

\section{Acknowledgements}

The main results of this paper were obtained during my doctoral studies at École Polytechnique Fédérale de Lausanne, supported by a grant from the Swiss National Science Foundation (grant number $200021 \_ 146223$). I am very grateful to Donna Testerman, James E. Humphreys, Gunter Malle, and two anonymous referees for helpful comments and suggestions on earlier versions of this paper.

%\Urlmuskip=0mu plus 1mu\relax

%\begin{thebibliography}{amsalpha}

\end{document}